\documentclass[12pt]{article}

\usepackage[utf8]{inputenc}
\usepackage[T1]{fontenc}
\usepackage[english]{babel}

\usepackage{lmodern}
\usepackage{microtype}

\usepackage[margin=1in]{geometry}

\usepackage{amsmath, amssymb, amsfonts, amsthm, mathtools, mathrsfs}

\usepackage{graphicx}
\usepackage{tikz}
\usepackage{tikz-cd}

\usepackage{enumitem}

\usepackage{hyperref}
\hypersetup{
    colorlinks=true,
    linkcolor=blue,
    citecolor=blue,
    urlcolor=blue
}

\usepackage{cleveref}
\theoremstyle{plain}
\newtheorem{theorem}{Theorem}[section]
\newtheorem{lemma}[theorem]{Lemma}
\newtheorem{proposition}[theorem]{Proposition}

\theoremstyle{definition}
\newtheorem{definition}[theorem]{Definition}

\theoremstyle{remark}
\newtheorem{remark}[theorem]{Remark}

\title{\bf A Jacobi Coupling Construction   on Associated Bundles}

\author{
Emmanuel Davakan\thanks{IMSP, Porto-Novo, Benin. Email: emmanuel.davakan@imsp-uac.org} \\
Djideme Franck Houenou\thanks{IMSP, Porto-Novo, Benin. Email: rdjeam@imsp-uac.org} \\
A\"issa Wade\thanks{Mathematics Department, Penn State University, University Park, USA. Email: wade@math.psu.edu}
}

\date{}

\begin{document}

\maketitle

\begin{abstract}
We extend the Sternberg--Weinstein coupling construction to the  Jacobi geometry setting.
Starting from a Jacobi  Hamiltonian $G$-space and a principal bundle equipped with a connection whose curvature satisfies some nondegeneracy condition, we show that the associated bundle naturally carries a Jacobi structure compatible with the canonical ones on the fibers.
This construction provides a unified framework encompassing the symplectic, locally conformal symplectic, and contact cases. It  reveals new coupling phenomena related to the presence of the Reeb vector field.
\end{abstract}
	
	\noindent { \scriptsize \textbf{ Keywords:} Jacobi manifold, associated bundle, Hamiltonian action, coupling form.}\\
\noindent { \scriptsize \textbf{ MSC 2020:} Primary  53CXX, 53D XX Secondary 53D20; 53D17.}

	\tableofcontents

\section{Introduction}
The construction of symplectic structures on associated bundles originates in the foundational works of Sternberg \cite{Ste77} and Weinstein \cite{Wei80}. This is closely related to the study of symplectic fibrations  that has attracted a lot of interest in the 1980's (see \cite{GLSW83, GS84}).
Let $(F,\omega)$ be a symplectic manifold equipped with a Hamiltonian action of a Lie group $G$, and let $P \to B$ be a principal $G$-bundle endowed with a connection. The associated bundle $M = P \times_G F$ carries a canonical symplectic structure provided that the connection is \emph{fat}, namely,  the curvature, composed with the moment map $\mu : F \to \mathfrak{g}^*$, induces a nondegenerate $2$-form on the horizontal distribution. This construction, known as the \emph{coupling form}, plays a central role in symplectic geometry. In \cite{HWD25}, we provide necessary and sufficient conditions for the existence of a locally conformal presymplectic coupling 2-form on the total space of a locally conformal symplectic fibration.

\medskip
This mechanism admits natural extensions beyond the symplectic setting. In the framework of locally conformally symplectic geometry, it is shown in \cite{HRS15} that coupling-type constructions persist: although the condition $d\omega = 0$ is replaced by $d\omega = \theta \wedge \omega$, global structures can still be obtained from a Hamiltonian action and a connection satisfying appropriate compatibility conditions.
In contact geometry, related phenomena arise in the study of group actions and fibrations, notably in the work of Lerman \cite{Ler04}. Although these constructions do not take the form of a direct coupling theorem analogous to the symplectic case, they highlight the fundamental role played by curvature in the construction of global geometric structures.

\medskip 
These developments reflect a common geometric principle, naturally formulated in Jacobi geometry setting. A Jacobi structure on a smooth manifold $N$ is a pair $(\Lambda_N,E_N)$ consisting of a bivector field $\Lambda_N$ and a vector field $E_N$ satisfying
\[
[\Lambda_N,\Lambda_N]_{\mathrm{SN}} = 2E_N \wedge \Lambda_N, 
\qquad 
[\Lambda_N,E_N]_{\mathrm{SN}} = 0,
\]
where $[\cdot,\cdot]_{\mathrm{SN}}$ denotes the Schouten--Nijenhuis bracket. Introduced independently by Kirillov \cite{Kir76} and Lichnerowicz \cite{Lic78}, these structures encompass Poisson structures (when $E_N=0$), as well as contact and locally conformally symplectic structures (see  for instance \cite{DLM91,Vai85}).

\medskip 
The goal of the present paper is to establish a coupling-type theorem for Jacobi structures, thereby unifying the constructions described above. We show that a Hamiltonian Jacobi action, together with a principal bundle and a connection satisfying a nondegeneracy condition gives rise to a Jacobi structure on the associated bundle. We will prove our main result:

\medskip

\noindent\textbf{Theorem.}
Let $(F,\Lambda_F,E_F)$ be a Jacobi manifold, and let $G$ be a Lie group acting on $F$ by Jacobi automorphisms. Assume that this action is Hamiltonian, with a $G$-equivariant moment map $\mu : F \to \mathfrak{g}^*$. Let $P \to B$ be a principal $G$-bundle equipped with a connection $\omega$ with curvature $\mathrm{Curv}_\omega$. If $\omega$ is $\mu(F)$-fat, then the associated bundle $M = P \times_G F$ carries a Jacobi structure $(\Lambda,E)$ whose restriction to each fiber is isomorphic to $(\Lambda_F,E_F)$.

\medskip

Our construction provides a concrete realization, in the setting of associated bundles, of the coupling approach to Jacobi structures introduced by Vaisman \cite{Vai2004}. The principal connection induces a splitting of $TM$ into horizontal and vertical components, while the Jacobi structure on $F$ determines a fiberwise Jacobi structure on $M$. The interaction between these components is governed by the curvature via the infinitesimal action of $\mathfrak{g}$. In particular, the appearance of the term $\Omega_\omega\,E_F$ reflects an intrinsic coupling between the curvature and the Reeb vector field, a phenomenon that is specific to the Jacobi setting.
More generally, our construction extends symplectic coupling forms, their locally conformally symplectic counterparts, and related constructions in contact geometry. The curvature compatibility conditions involve an additional term of the form $\Omega_\omega\,E_F$, which has no analogue in symplectic or Poisson geometry, where the structure is entirely governed by bivector fields.

\medskip
Finally, our result may be viewed as a Jacobi analogue of the construction of the third author in \cite{AGAG08}, where coupling Dirac structures are obtained on associated bundles arising from Hamiltonian Poisson spaces.  Integration of Poisson fibrations coming from coupling Poisson structures was studied in \cite{BrahicFernandes2008}. Our construction  also produces a natural class of Jacobi fibrations, in the sense that the associated bundle inherits a Jacobi structure compatible with the fibration. In this perspective, our construction connects with the theory of conformal Jacobi fibrations studied in \cite{Wade2013}, where such fibrations are investigated through their integration to contact groupoids.

	\section{Preliminaries}
\label{sec:prelim}

\subsection{A brief review: associated bundles}

\noindent We briefly recall  classical definitions related to principal bundles (see \cite{KN63}).
\begin{definition}
A \emph{principal $G$-bundle} consists of a smooth manifold $P$ together with a smooth, free and proper right action of a Lie group $G$ such that the quotient $B := P/G$ is a smooth manifold and the projection 
$\pi : P \to B$ is a smooth surjective submersion. Moreover, $\pi$ is  a locally trivial fiber bundle in the sense that for every $b \in B$ there exists an open neighborhood $U \subseteq B$  of $b$ and a $G$-equivariant diffeomorphism
\[
\Phi : \pi^{-1}(U) \to U \times G
\]
satisfying
\[
\Phi(p \cdot g) = \Phi(p)\cdot g,
\]
where $G$ acts on $U \times G$ by right multiplication on the second factor.
\end{definition}

\begin{definition}
Let $\pi : P \to B$ be a principal $G$-bundle and $F$ a smooth $G$-manifold. 
The associated bundle $P \times_G F$ is the quotient of $P \times F$ by the right action
\[
(p,f)\cdot g := (pg,\, g^{-1}\!\cdot f).
\]
Denoting by $[p,f]$ the class of $(p,f)$, the projection $\pi_\rho : P \times_G F \to B$ given by : 
\[
\pi_\rho([p,f]) := \pi(p)
\]
is well-defined and makes $P \times_G F$ into a smooth fiber bundle over $B$ with typical fiber $F$.
\end{definition}

\subsection{Jacobi manifolds}

\noindent Jacobi structures were introduced independently by Kirillov \cite{Kir76} and Lichnerowicz \cite{Lic78}. 
We briefly recall the definition of a Jacobi structure in the sense of Lichnerowicz and refer the reader to \cite{Vai85} for a detailed exposition.

\begin{definition}
A \emph{Jacobi structure} on a smooth manifold $F$ (in Lichnerowicz's sense) is given by a pair $(\Lambda_F,E_F)$, 
where $\Lambda_F \in \mathfrak{X}^2(F)$ is a bivector field and 
$E_F \in \mathfrak{X}(F)$ is a vector field such that: 
\[
[\Lambda_F,\Lambda_F]=2E_F\wedge\Lambda_F, 
\qquad {\rm and} \qquad 
[E_F,\Lambda_F]=0,
\]
where $[\cdot,\cdot]$ denotes the Schouten bracket.
\end{definition}

\noindent The associated Jacobi bracket on $C^\infty(F)$ is defined by:
\[
\{u,v\}
=
\Lambda_F(du,dv)+u E_F(v)-v E_F(u).
\]

\begin{remark}
\noindent This notion encompasses several classical geometries:
\begin{itemize}
\item Poisson manifolds correspond to the case $E_F=0$;
\item any contact manifold $(F,\alpha)$ defines a Jacobi structure whose vector field $E_F$ is the Reeb field and whose bivector field $\Lambda_F$ is induced by $d\alpha$ on $\ker \alpha$;
\item any locally conformal symplectic manifold $(F,\omega,\theta)$ satisfying $d\omega=\theta \wedge \omega$ defines a Jacobi structure by setting $\Lambda_F=\omega^{-1}$ and $E_F=\theta^\sharp$.
\end{itemize}
\end{remark}

\subsection{Hamiltonian Actions in Jacobi Geometry}
\noindent Let $(F,\Lambda_F,E_F)$ be a Jacobi manifold.
A vector field $X \in \mathfrak{X}(F)$ is said to be \emph{Hamiltonian}
if there exists a function $h \in C^\infty(F)$ such that $X = X_h,$ where
\[
X_h := \Lambda_F^\sharp(dh) + hE_F,
\]
and $\Lambda_F^\sharp : T^*F \to TF$ denotes the bundle map defined by
$\beta(\Lambda_F^\sharp(\alpha)) = \Lambda_F(\alpha,\beta)$ for all $\alpha,\beta \in T^*F$.

\begin{definition}\label{Hamiltonian Jacobi action}
Let $(F,\Lambda_F,E_F)$ be a Jacobi manifold, and let $G$ be a Lie group with Lie algebra $\mathfrak{g}$.  
A \emph{Hamiltonian Jacobi action} of $G$ on $F$ is a smooth action of $G$ on $F$ by Jacobi automorphisms for which there exists a smooth moment map
$\mu : F \longrightarrow \mathfrak{g}^*,$ such that:

\begin{enumerate}
\item For every $X \in \mathfrak{g}$, the fundamental vector field $X_F$ associated with $X$ coincides with the Hamiltonian vector field of the function $\mu^X : F \to \mathbb{R}$ defined by
\[
\mu^X(f) := \langle \mu(f), X \rangle, \qquad \forall f \in F.
\]
i.e $X_F = X_{\mu^X}.$

\item The moment map $\mu$ is $G$-equivariant with respect to the given action on $F$ and the coadjoint action on $\mathfrak{g}^*$, i.e.
\[
\mu(g \cdot f) = \operatorname{Ad}^*_{g^{-1}}(\mu(f)),
\qquad \forall g \in G,\; f \in F.
\]
\end{enumerate}

\noindent In this case, $(F,\Lambda_F,E_F)$ is called a \emph{Jacobi Hamiltonian $G$-space}.
\end{definition}

\begin{remark}
\noindent In the setting of locally conformally symplectic manifolds, 
the twisted Hamiltonian actions introduced in \cite{HRS15} 
naturally correspond to the Hamiltonian Jacobi actions defined as above.
\end{remark}

\subsection{Connections and Curvature}

\noindent Let $\pi: P \to B$ be a principal $G$-bundle. A connection on $P$ is a 
$1$-form $\omega \in \Omega^1(P,\mathfrak{g})$ satisfying:
\begin{enumerate}
\item $\omega_p(X^*_p)=X$ for all $p\in P$, $X \in \mathfrak{g}$,  and where $X^*$ is the fundamental vector field associated with $X$;
\item $R_g^*\omega=\operatorname{Ad}_{g^{-1}} \circ \omega$ for every $g \in G$.
\end{enumerate}
\noindent Such a connection induces the decomposition
\[
T_pP = H_p \oplus V_p, 
\qquad 
H_p=\ker\omega_p,
\qquad 
V_p=\ker(d\pi_p).
\]
The curvature of $\omega$ is the $\mathfrak{g}$-valued $2$-form
\[
\Omega_\omega = d\omega + \tfrac{1}{2}[\omega,\omega],
\]
which satisfies
\[
R_g^*\Omega_\omega=\operatorname{Ad}_{g^{-1}}\circ\Omega_\omega,
\qquad 
d\Omega_\omega+[\omega,\Omega_\omega]=0.
\]

\begin{definition}\label{fat connection}
\noindent Let $S \subset \mathfrak{g}^*$. The connection $\omega \in \Omega^2(P,\mathfrak{g})$ is said to be 
$S$-\emph{fat} if, for every $s \in S$, the form
\[
K_s(X,Y)=\langle s,\Omega_\omega(X,Y)\rangle,
\quad X,Y\in H_p,
\]
is nondegenerate.
\end{definition}

\subsection{Induced Connection on the Associated Bundle}

\noindent
Let $M = P \times_G F$ be the associated bundle corresponding to a smooth left action of $G$ on $F$, where $G$ acts on $P \times F$ by
\[
(p,f)\cdot g = (pg,\, g^{-1}\cdot f).
\]
Let $\omega$ be a $1$-from connection on $P$, and denote by $H_p = \ker \omega_p$ the horizontal subspace at $p \in P$.
Then $\omega$ induces a connection $\Gamma$ on $M$ whose horizontal distribution $\mathcal{H}$  defined by
\[
\mathcal{H}_{[p,f]}
=
(d\pi_{\mathrm{quot}})_{(p,f)}\bigl(H_p \times \{0\}\bigr),
\]
where $\pi_{\mathrm{quot}} : P \times F \to P \times_G F$ denotes the quotient map.

\noindent This definition is independent of the choice of representative $(p,f)$ of $[p,f]$. 
Indeed, the $G$-equivariance of the connection implies
$dR_g(H_p)=H_{pg},$ and this is compatible with the quotient construction of $M$.
Thus, $\mathcal{H}$ defines a smooth horizontal distribution on $M$ which is complementary to the vertical subbundle $\mathcal{V}:=\ker(d\pi_M)$, where $\pi_M : M \to B$ is the natural bundle projection.
The curvature of $\Gamma$ is given by its failure to be integrable and takes values in the vertical bundle. More precisely, let $X,Y \in \mathfrak{X}(B)$ and let $X^{\rm hor},Y^{\rm hor}$ denote their horizontal lifts to $P$. Then, for every $[p,f] \in M$,
\[
R_{\mathcal{H}}(X,Y)([p,f])
=
(d\pi_{\mathrm{quot}})_{(p,f)}
\Bigl(
0,\,
\rho^F\bigl(\Omega_\omega(X^{\rm hor},Y^{\rm hor})(p)\bigr)\big|_{f}
\Bigr),
\]
where $\rho^F : \mathfrak{g} \to \mathfrak{X}(F)$ denotes the infinitesimal action of $\mathfrak{g}$ on $F$.
\section{Main result} 

\subsection{Statement of the main result} 

\noindent In this Section, our main  goal is to prove the following theorem:

\begin{theorem}\label{MainTheorem}
Let $G$ be a Lie group with Lie algebra $\mathfrak g$, and  $P \to B$, a principal $G$-bundle. Let $(F,\Lambda_F,E_F)$ be a Jacobi Hamiltonian $G$-space.
Let $\omega \in \Omega^1(P,\mathfrak g)$ be a connection on $P$, which is $\mu(F)$-fat in the sense of Definition~\ref{fat connection}. Then the associated bundle
$M = P \times_G F$ carries a natural Jacobi structure $(\Lambda,E)$ such that, for every $b \in B$, the restriction of $(\Lambda,E)$ to the fiber $M_b \simeq F$ is isomorphic, as a Jacobi structure, to $(\Lambda_F,E_F)$.
\end{theorem}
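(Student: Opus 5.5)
The plan is to mimic the Sternberg--Weinstein construction, working on $P\times F$ and descending to $M$, and to use Vaisman's coupling description of Jacobi structures. I would use the splitting $TM=\mathcal H\oplus\mathcal V$ from the induced connection $\Gamma$ and write the desired structure as
\[
\Lambda=\Lambda^{\mathcal V}+\Lambda^{\mathcal H},\qquad E=E^{\mathcal V},
\]
with the pieces defined as follows.

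\begin{enumerate}
\item The vertical pieces $\Lambda^{\mathcal V}$ and $E^{\mathcal V}$ are the images of $(0,\Lambda_F)$ and $(0,E_F)$ under $d\pi_{\mathrm{quot}}$. They are well defined because $G$ acts on $F$ by Jacobi automorphisms, and their restriction to each fiber $M_b\simeq F$ (after choosing $p\in\pi^{-1}(b)$) is exactly $(\Lambda_F,E_F)$.
\item The horizontal piece $\Lambda^{\mathcal H}$ is the inverse of the nondegenerate horizontal $2$-form
\[
\sigma_{[p,f]}(u,v)=-\langle \mu(f),\Omega_\omega(\tilde u,\tilde v)\rangle,
\]
where $\tilde u,\tilde v\in H_p$ are the horizontal lifts. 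This is well defined by equivariance of $\mu$ and $R_g^*\Omega_\omega=\operatorname{Ad}_{g^{-1}}\Omega_\omega$, and nondegenerate precisely by the $\mu(F)$-fatness of Definition~\ref{fat connection}.
\end{enumerate}

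An equivalent and more computable route is the minimal-coupling form on $P\times F$:
\[
\tilde\sigma=-d\langle\mu,\omega\rangle \quad\text{on } H\times TF.
\]
This follows the symplectic proof that $\sigma$ is the horizontal part of $d\langle \mu,\omega\rangle$ restricted to horizontal vectors, since the term $\tfrac12\langle\mu,[\omega,\omega]\rangle$ vanishes on $H$.

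The key step is verifying the Jacobi identities $[\Lambda,\Lambda]=2E\wedge\Lambda$ and $[\Lambda,E]=0$. By Vaisman's coupling theorem, these reduce to four conditions on the triple $(\Gamma,\sigma,(\Lambda^{\mathcal V},E^{\mathcal V}))$:
\begin{enumerate}
\item[(a)] $(\Lambda^{\mathcal V},E^{\mathcal V})$ is a Jacobi structure along fibers. This holds by construction.
\item[(b)] Parallel transport of $\Gamma$ preserves the fiberwise Jacobi structure. This follows because horizontal lifts to $M$ of vector fields on $B$ are projections of $(X^{\rm hor},0)$ and commute with $(0,\Lambda_F)$ and $(0,E_F)$.
\item[(c)] The curvature of $\Gamma$ is Hamiltonian with respect to $\sigma$, i.e.
\[
R_{\mathcal H}(X,Y)=\Lambda^{\mathcal V\sharp}\,d\sigma(X,Y)+\sigma(X,Y)\,E^{\mathcal V}.
\]
By the curvature formula above, $R_{\mathcal H}(X,Y)$ is the fundamental field of $\Omega_\omega(X^{\rm hor},Y^{\rm hor})$. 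By the Hamiltonian condition in Definition~\ref{Hamiltonian Jacobi action}, this equals $X_{\mu^{\Omega}}=\Lambda_F^\sharp d\mu^\Omega+\mu^\Omega E_F$, which matches up to the sign convention in $\sigma$. This is exactly where the extra term $\Omega_\omega E_F$ enters.
\item[(d)] The covariant exterior derivative of $\sigma$ along $\Gamma$, twisted by the Reeb term, vanishes: $\partial_\Gamma\sigma=0$ in Vaisman's sense, including the term built from $E^{\mathcal V}(\sigma)$. This reduces to the Bianchi identity $d\Omega_\omega+[\omega,\Omega_\omega]=0$ together with equivariance of $\mu$.
\end{enumerate}

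I expect condition (d) to be the main obstacle. In the Jacobi setting, Vaisman's conditions mix $\sigma$ with $E$, giving terms like $E^{\mathcal V}(\sigma)$ and conformal factors, so a Bianchi-type argument must account for $E_F(\mu^X)$. I would handle it by computing $E_F(\mu^X)$ from the fact that $E_F$ is $G$-invariant and $X_F=X_{\mu^X}$. From $[X_{\mu^X},E_F]=0$ and the standard identity $[E_F,X_h]=X_{E_F(h)}$ one gets $X_{E_F(\mu^X)}=0$, so $E_F(\mu^X)$ is a Casimir-type function. I would then check that this is exactly what the twisted closedness condition requires. If that is not enough on its own, I would instead verify the Jacobi identity directly on local generators: basic functions, fiber functions via a local trivialization, and the functions $\mu^X$. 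The bracket of two basic functions is $\pm\langle\mu,\Omega_\omega\rangle$ plus Reeb corrections, and cyclic sums collapse to the Bianchi identity.
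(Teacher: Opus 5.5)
Your construction and overall strategy match the paper's. You use the same $\Lambda^{\mathcal V},E^{\mathcal V}$, the inverse of $\pm\langle\mu,\Omega_\omega\rangle$ on $\mathcal H$, and a reduction to coupling conditions; you cite Vaisman where the paper proves Proposition~\ref{Proposition 3.1} via Lemmas~\ref{Lemme 3.2} and~\ref{lem:curv-jacobi}. Your items (a)--(c) and closedness via Bianchi are fine.

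\textbf{The gap is the Reeb term in your item (d).}
\begin{itemize}
\item It is in the wrong place. Since $E$ is vertical, $E\wedge\Lambda$ vanishes on three covectors annihilating $\mathcal V$. So the $(H,H,H)$ part of $[\Lambda,\Lambda]=2E\wedge\Lambda$ is the untwisted condition $\partial_\Gamma\sigma=0$, which Bianchi gives directly; equivariance is needed only for descent.
\item It belongs to $[\Lambda,E]=0$. For a local frame $X_i$ of $TB$, write $\Lambda^{\mathcal H}=\tfrac12\Lambda^{ij}X_i^{\rm hor}\wedge X_j^{\rm hor}$. Since $[E,X_i^{\rm hor}]$ is vertical, the horizontal component of $\mathcal L_E\Lambda^{\mathcal H}$ is $\tfrac12E(\Lambda^{ij})\,X_i^{\rm hor}\wedge X_j^{\rm hor}$. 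So you need $E^{\mathcal V}(\sigma(X,Y))=0$, i.e.\ $E_F(\mu^\xi)=0$. This is condition (b) of Proposition~\ref{Proposition 3.1}, $[\Lambda^{\mathrm{hor}},E^{\mathrm{ver}}]=0$. In your fallback bracket computation it is the Jacobi identity for two basic functions $u,v$ and the constant $1$, which reduces to $E(\{u,v\})=0$.
\item Your argument does not deliver it. You only get $X_{E_F(\mu^\xi)}=0$, and that does not force vanishing. On $\mathbb R^2$ with the Jacobi structure $\Lambda=e^x\partial_x\wedge\partial_y$, $E=e^x\partial_y$, the function $h=e^{-x}$ satisfies $X_h=0$.
\end{itemize}
So ``Casimir-type'' is not enough, and the step you defer is open and aimed at the wrong condition.

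\textbf{The fix is a short consequence of equivariance.} Since $\operatorname{Ad}_{\exp(t\xi)}\xi=\xi$, the function $\mu^\xi$ is invariant under the flow of $\xi_F$. Hence
\[
0=\xi_F(\mu^\xi)=\Lambda_F(d\mu^\xi,d\mu^\xi)+\mu^\xi E_F(\mu^\xi)=\mu^\xi\,E_F(\mu^\xi).
\]
Thus $E_F(\mu^\xi)$ vanishes on $\{\mu^\xi\neq0\}$. It also vanishes on the interior of $\{\mu^\xi=0\}$, where $d\mu^\xi=0$. This union is dense, so $E_F(\mu^\xi)\equiv0$.

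Consequently $E^{\mathcal V}(\sigma(X,Y))([p,f])=\pm E_F(\mu^{\xi})(f)=0$ with $\xi=\Omega_\omega(X^{\rm hor},Y^{\rm hor})(p)$. Together with $[X^{\rm hor},E^{\mathcal V}]=0$ from your item (b), this gives $[\Lambda^{\mathcal H},E^{\mathcal V}]=0$, and hence $[\Lambda,E]=0$. The paper lists this condition in Proposition~\ref{Proposition 3.1} but does not verify it for the construction, so you should write this argument out explicitly.
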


\medskip

\subsection{Proof of the main result}

\noindent The proof of this result will be divided into two parts. In the first part, we construct a pair $(\widetilde \Lambda, \widetilde E)$ consisting of a bivector field $\widetilde \Lambda$ and a vector field $\widetilde E$ on $P \times F$, which we then descend to the quotient $M = P \times_G F$ to obtain a pair $(\Lambda,E)$. The second part is devoted to showing that $(\Lambda,E)$ defines a Jacobi structure on $M = P \times_G F$.

\subsubsection{Tensors on $P \times_G F$}

\noindent
\textbf{1. Construction on $P \times F$.}

\noindent The connection on $P$ induces the decomposition
\[
T_{(p,f)}(P\times F)=H_p\oplus V_p\oplus T_fF.
\]
We extend $(\Lambda_F,E_F)$ by setting
\[
\Lambda^{\mathrm{ver}}_{(p,f)}=\Lambda_F(f),
\qquad
\widetilde{E}_{(p,f)}=E_F(f).
\]
We define a 2-form on $H_p$ by
\begin{equation}
\label{horizontal 2-form}
\Omega_{(p,f)}(X,Y)
=
\langle\mu(f),\Omega_\omega(X,Y)\rangle=(K_{\mu(f)} (X,Y))(p).
\end{equation}

\noindent The assumption that $\omega$ is $\mu(F)$-fat implies that $\Omega$ is nondegenerate; hence its inverse defines a horizontal bivector field $\Lambda^{\mathrm{hor}}$. We then set
\[
\widetilde{\Lambda}=\Lambda^{\mathrm{ver}}+\Lambda^{\mathrm{hor}}.
\]

\noindent \textbf{2. Invariance and descent to $P \times_G F$.}

\noindent We consider the diagonal action of $G$ on $P\times F$ defined by
\[
\rho : G\times (P\times F) \to P\times F,
\qquad
\rho(g,(p,f))=(pg^{-1},\,\rho^F(g,f)).
\]
where  $\rho^F: G \times F \to F$ denotes also  $G$-action on $F$. 

\medskip

\noindent $\bullet$ \textbf{Invariance.}
Since the action $\rho^F$ preserves the Jacobi structure on $F$, we have
\[
(\rho^F_g)_*\Lambda_F = \Lambda_F,
\qquad
(\rho^F_g)_*E_F = E_F,
\]
hence $(\Lambda^{\mathrm{ver}},\widetilde E)$ is invariant.
By equivariance of the curvature,
\[
R_g^*\Omega_\omega = \operatorname{Ad}_{g^{-1}}\circ \Omega_\omega,
\]
and the moment map satisfies
\[
\mu(g\cdot f)=\operatorname{Ad}^*_{g^{-1}}\mu(f).
\]
It follows that, for all $g \in G$ and horizontal vectors $X,Y$,
\[
(\rho_g^*\Omega)(X,Y)
= \big\langle \operatorname{Ad}^*_{g^{-1}}\mu(f),\, \operatorname{Ad}_{g^{-1}}\Omega_\omega(X,Y)\big\rangle
= \langle \mu(f), \Omega_\omega(X,Y)\rangle
= \Omega(X,Y),
\]
using the duality between the adjoint and coadjoint representations. Hence $\Omega$ is $G$-invariant. Since $\Lambda^{\mathrm{hor}}$ is defined as the inverse of $\Omega$ on the horizontal distribution, which is itself preserved by the action, it follows that $\Lambda^{\mathrm{hor}}$ is also $G$-invariant. Therefore $(\widetilde{\Lambda},\widetilde E)$ is $G$-invariant.

\medskip

\noindent $\bullet$ \textbf{Descent to the quotient.}
Consider the canonical projection
\[
\begin{tikzcd}
P\times F \arrow[r, "\rho_g"] \arrow[d, "\pi"']
& P\times F \arrow[d, "\pi"] \\
M \arrow[r, equals]
& M
\end{tikzcd}
\]
\medskip

\noindent \emph{Projectability:}
Let $(p,f)$ and $(pg^{-1},g\cdot f)$ be two representatives of the same equivalence class.
Then, for every $v\in T_{(p,f)}(P\times F)$,
\[
(d\pi)_{(pg^{-1},g\cdot f)} \circ (d\rho_g)_{(p,f)}(v)
=
(d\pi)_{(p,f)}(v).
\]
Since $(\rho_g)_*\widetilde{\Lambda}=\widetilde{\Lambda}$, it follows that the value of $\widetilde{\Lambda}$ is independent of the representative of $[p,f]$.
Thus $\widetilde{\Lambda}$ is projectable and there exists a unique bivector field $\Lambda$ on $M$ such that
\[
(d\pi)_*\widetilde{\Lambda} = \Lambda.
\]
Similarly, $\widetilde E$ descends to a vector field $E$ on $M$.

\medskip

\noindent It follows that $(\widetilde{\Lambda},\widetilde E)$ is $G$-invariant and projectable, hence  it gives a well-defined pair $(\Lambda,E)$ on
\[
M = P\times_G F.
\]
Recall that the connection on $P$ induces a connection $\Gamma$ on $M$ by transporting the horizontal distribution. We thus obtain a decomposition
\[
TM = \mathcal{H} \oplus \mathcal{V},
\]
where $\mathcal{H} $ and $\mathcal{V} $ denote respectively the horizontal and vertical subbundles.

\noindent For $X,Y \in \mathfrak{X}(B)$, let $X^{\mathrm{hor}}, Y^{\mathrm{hor}}$ denote their horizontal lifts to $M$. The curvature of the connection is defined by
\[
\operatorname{Curv}_\Gamma(X,Y)
:=
\operatorname{proj}_{\mathrm{ver}}
\bigl([X^{\mathrm{hor}},Y^{\mathrm{hor}}]\bigr).
\]

\medskip

\begin{remark}\label{rmq3.1}
Let $\Lambda = \Lambda^{\mathrm{ver}}+\Lambda^{\mathrm{hor}}$ and let $E = E^{\mathrm{ver}}$
be the tensors on $M$ defined above relative to the decomposition
\[
TM = \mathcal{H}  \oplus \mathcal{V} .
\]
We have
\[
[\Lambda,\Lambda]
=
[\Lambda^{\mathrm{hor}},\Lambda^{\mathrm{hor}}]
+2
[\Lambda^{\mathrm{hor}},\Lambda^{\mathrm{ver}}]
+
[\Lambda^{\mathrm{ver}},\Lambda^{\mathrm{ver}}],
\]
and
\[
E \wedge \Lambda
=
 E^{\mathrm{ver}}\wedge \Lambda^{\mathrm{ver}}+ E^{\mathrm{ver}}\wedge \Lambda^{\mathrm{hor}}.
\]
The vertical part of the Jacobi identity $[\Lambda,\Lambda] = 2 E \wedge \Lambda$  is:
\[
[\Lambda^{\mathrm{ver}},\Lambda^{\mathrm{ver}}]=2 E^{\mathrm{ver}}\wedge \Lambda^{\mathrm{ver}}.
\]
In other words, the Jacobi identity is equivalent to the system
\[
\begin{cases}
[\Lambda^{\mathrm{ver}},\Lambda^{\mathrm{ver}}]
=
2 E^{\mathrm{ver}}\wedge \Lambda^{\mathrm{ver}}, \\[0.4em]
[\Lambda^{\mathrm{hor}},\Lambda^{\mathrm{hor}}]
+
2[\Lambda^{\mathrm{ver}},\Lambda^{\mathrm{hor}}]
=
2 E^{\mathrm{ver}}\wedge \Lambda^{\mathrm{hor}}.
\end{cases}
\]
\end{remark}

\subsubsection{Jacobi identity}

\noindent In view of Remark~\ref{rmq3.1}, to show that $(\Lambda,E)$ defines a Jacobi structure on $M$, it is enough to verify the following three points:

\begin{itemize}
\item \textbf{Vertical part:} $(\Lambda^{\mathrm{ver}},E^{\mathrm{ver}})$ defines a vertical Jacobi structure. This follows directly from the fact that $(\Lambda_F,E_F)$ is a Jacobi structure on the fiber $F$.

\item \textbf{Horizontal part:} 
\[
[\Lambda^{\mathrm{hor}},\Lambda^{\mathrm{hor}}]^{\mathrm{hor}} = 0,
\]
that is, $\Lambda^{\mathrm{hor}}$ defines a horizontal Poisson structure. This follows from the fact that the 2-form $\Omega$ defined as in Equation (\ref{horizontal 2-form}) is closed in horizontal directions.

\item \textbf{Mixed part:}
\[
\frac{1}{2} [\Lambda^{\mathrm{hor}},\Lambda^{\mathrm{hor}}]^{\mathrm{mix}}
+
[\Lambda^{\mathrm{ver}},\Lambda^{\mathrm{hor}}]
=
 E^{\mathrm{ver}}\wedge \Lambda^{\mathrm{hor}}.
\]
This follows from the Hamiltonian condition
\[
X_F=\Lambda_F^\sharp(d\mu^X)+\mu^X E_F,
\]
which encodes the compatibility between the action and the Jacobi structure.
\end{itemize}

\begin{lemma}\label{Lemme 3.2}
Let $M$ be a manifold endowed with a decomposition
\[
\Lambda=\Lambda^{\mathrm{ver}}+\Lambda^{\mathrm{hor}},
\qquad E= E^{\mathrm{ver}},
\]
where $\Lambda^{\mathrm{ver}}$ is vertical, $\Lambda^{\mathrm{hor}}$ is horizontal, and $E$ is vertical.
Let $\alpha$ be a horizontal 1-form, and let $\beta,\gamma$ be two vertical 1-forms. Then
\[
\left( \frac12[\Lambda,\Lambda]-E\wedge\Lambda \right) (\alpha,\beta,\gamma) = 0
\quad \Longleftrightarrow \quad
[\Lambda^{\mathrm{hor}},\Lambda^{\mathrm{ver}}](\alpha,\beta,\gamma)=0
\quad \Longleftrightarrow \quad
\mathcal{L}_{X^{\mathrm{hor}}}\Lambda^{\mathrm{ver}}(\beta,\gamma)=0,
\] where $X^{\mathrm{hor}}=(\Lambda^{\mathrm{hor}})^\sharp(\alpha).$
\end{lemma}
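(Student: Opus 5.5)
We would prove the two equivalences separately. The main tool is a type-count argument: we split every trivector by the decomposition $TM=\mathcal H\oplus\mathcal V$ and evaluate it on the specific triple $(\alpha,\beta,\gamma)$, where $\alpha$ annihilates $\mathcal V$ and $\beta,\gamma$ annihilate $\mathcal H$. A trivector component of type $(i,j)$ ($i$ horizontal and $j$ vertical slots) can contribute to this evaluation only if $(i,j)=(1,2)$.

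\textbf{First equivalence.} We expand as in Remark~\ref{rmq3.1}:
\[
\tfrac12[\Lambda,\Lambda]-E\wedge\Lambda=\tfrac12[\Lambda^{\mathrm{hor}},\Lambda^{\mathrm{hor}}]+[\Lambda^{\mathrm{hor}},\Lambda^{\mathrm{ver}}]+\Bigl(\tfrac12[\Lambda^{\mathrm{ver}},\Lambda^{\mathrm{ver}}]-E^{\mathrm{ver}}\wedge\Lambda^{\mathrm{ver}}\Bigr)-E^{\mathrm{ver}}\wedge\Lambda^{\mathrm{hor}}.
\]
We then check each term on $(\alpha,\beta,\gamma)$.

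\begin{itemize}
\item $E^{\mathrm{ver}}\wedge\Lambda^{\mathrm{hor}}$ has type $(2,1)$, so it vanishes on $(\alpha,\beta,\gamma)$.
\item The term $\tfrac12[\Lambda^{\mathrm{ver}},\Lambda^{\mathrm{ver}}]-E^{\mathrm{ver}}\wedge\Lambda^{\mathrm{ver}}$ vanishes on the whole of $M$. This is because $\Lambda^{\mathrm{ver}}$ and $E^{\mathrm{ver}}$ come from the fibrewise Jacobi structure $(\Lambda_F,E_F)$, as in the vertical part of the proof, and $\mathcal V$ is integrable.
\item For $[\Lambda^{\mathrm{hor}},\Lambda^{\mathrm{hor}}]$ we use the Koszul-type formula for the Schouten bracket of a bivector with itself. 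Each term contains a factor $\Lambda^{\mathrm{hor}}(\beta,\cdot)=0$ or $\Lambda^{\mathrm{hor}}(\gamma,\cdot)=0$, together with derivatives of such factors. A careful bookkeeping shows that the surviving pieces pair $\beta$ and $\gamma$ with a bracket of $\Lambda^{\mathrm{hor}}$-sharps. Those sharps are horizontal, and since $\beta,\gamma$ kill $\mathcal H$, these pieces also vanish.
\end{itemize}

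Hence $\bigl(\tfrac12[\Lambda,\Lambda]-E\wedge\Lambda\bigr)(\alpha,\beta,\gamma)=[\Lambda^{\mathrm{hor}},\Lambda^{\mathrm{ver}}](\alpha,\beta,\gamma)$. This is an identity, which gives the first equivalence.

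\textbf{Second equivalence.} We use the standard formula, valid for bivectors $\Pi_1,\Pi_2$ and $1$-forms $a,b,c$:
\[
[\Pi_1,\Pi_2](a,b,c)=-\oint\Bigl(\Pi_1^\sharp(a)\bigl(\Pi_2(b,c)\bigr)-\Pi_2\bigl(\mathcal L_{\Pi_1^\sharp(a)}b,c\bigr)-\Pi_2\bigl(b,\mathcal L_{\Pi_1^\sharp(a)}c\bigr)\Bigr)+(1\leftrightarrow2),
\]
with the sign convention to be fixed consistently with the paper. Here $\oint$ denotes the cyclic sum over $(a,b,c)$, and $(1\leftrightarrow2)$ denotes the same expression with $\Pi_1,\Pi_2$ interchanged.

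Take $\Pi_1=\Lambda^{\mathrm{hor}}$ and $\Pi_2=\Lambda^{\mathrm{ver}}$, and use $(\Lambda^{\mathrm{hor}})^\sharp\beta=(\Lambda^{\mathrm{hor}})^\sharp\gamma=0$ and $(\Lambda^{\mathrm{ver}})^\sharp\alpha=0$. Most terms then die. The surviving group is
\[
X^{\mathrm{hor}}\bigl(\Lambda^{\mathrm{ver}}(\beta,\gamma)\bigr)-\Lambda^{\mathrm{ver}}\bigl(\mathcal L_{X^{\mathrm{hor}}}\beta,\gamma\bigr)-\Lambda^{\mathrm{ver}}\bigl(\beta,\mathcal L_{X^{\mathrm{hor}}}\gamma\bigr)=(\mathcal L_{X^{\mathrm{hor}}}\Lambda^{\mathrm{ver}})(\beta,\gamma),
\]
with $X^{\mathrm{hor}}=(\Lambda^{\mathrm{hor}})^\sharp\alpha$. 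There are also leftover terms of the shape $\Lambda^{\mathrm{hor}}(\alpha,\mathcal L_{(\Lambda^{\mathrm{ver}})^\sharp\beta}\gamma)$ and similar.

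\textbf{Main obstacle.} The hard step is showing these leftover terms cancel or vanish. We would try the following:
\begin{itemize}
\item rewrite $\mathcal L_Yc=i_Y\,dc+d(c(Y))$;
\item use that $\alpha$ is horizontal, and that for vertical $Y$ and vertical $c$ the form $\mathcal L_Yc$ restricted to $\mathcal H$ is governed by $d c$ on $\mathcal H\wedge\mathcal V$;
\item observe that such terms pair symmetrically in $\beta,\gamma$ under the cyclic sum and cancel.
\end{itemize}
If this direct route is messy, we would instead compute in local adapted coordinates $(x^i,y^a)$ with the coframe $dx^i$, $dy^a+A^a_i\,dx^i$, reduce to $\alpha=dx^i$ and $\beta,\gamma$ basic vertical coframe elements, and compare both sides. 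Either route yields $[\Lambda^{\mathrm{hor}},\Lambda^{\mathrm{ver}}](\alpha,\beta,\gamma)=\pm(\mathcal L_{X^{\mathrm{hor}}}\Lambda^{\mathrm{ver}})(\beta,\gamma)$, which gives the second equivalence.
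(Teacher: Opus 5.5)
\textbf{Verdict: there is a genuine gap in your second equivalence.} Your first equivalence is fine and follows the paper: on $(\alpha,\beta,\gamma)$ only $[\Lambda^{\mathrm{hor}},\Lambda^{\mathrm{ver}}]$ survives. You do not need the fibrewise Jacobi identity for this, and in the lemma's generality you do not have it. Integrability of $\mathcal V$ already makes $[\Lambda^{\mathrm{ver}},\Lambda^{\mathrm{ver}}]$ vertical, so it vanishes on $\alpha$ by type alone.

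\textbf{What goes wrong.} The expression inside your cyclic sum is exactly $(\mathcal L_{\Pi_1^\sharp a}\Pi_2)(b,c)$. So you are asserting
\[
[\Pi_1,\Pi_2](a,b,c)=-\oint(\mathcal L_{\Pi_1^\sharp a}\Pi_2)(b,c)-\oint(\mathcal L_{\Pi_2^\sharp a}\Pi_1)(b,c).
\]
No sign convention makes this true. On exact forms $df,dg,dh$ it returns $3\oint\bigl(\{\{f,g\}_1,h\}_2+\{\{f,g\}_2,h\}_1\bigr)$, which is three times the correct value up to sign. It is also not $C^\infty(M)$-multilinear. As a result, the leftover terms you single out cannot be made to vanish, because they do not vanish. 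Put $Z_\beta=(\Lambda^{\mathrm{ver}})^\sharp\beta$ and $Z_\gamma=(\Lambda^{\mathrm{ver}})^\sharp\gamma$. Then
\[
\Lambda^{\mathrm{hor}}(\alpha,\mathcal L_{Z_\beta}\gamma)-\Lambda^{\mathrm{hor}}(\alpha,\mathcal L_{Z_\gamma}\beta)
=\gamma([X^{\mathrm{hor}},Z_\beta])-\beta([X^{\mathrm{hor}},Z_\gamma])
=(\mathcal L_{X^{\mathrm{hor}}}\Lambda^{\mathrm{ver}})(\beta,\gamma)+X^{\mathrm{hor}}\bigl(\Lambda^{\mathrm{ver}}(\beta,\gamma)\bigr).
\]
This is antisymmetric, not symmetric, in $(\beta,\gamma)$, and generically nonzero.

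For a concrete check, on $\mathbb R^4$ take $\Lambda^{\mathrm{hor}}=\partial_{x_1}\wedge\partial_{x_2}$, $\Lambda^{\mathrm{ver}}=\partial_{y_1}\wedge\partial_{y_2}$, $\alpha=dx_2$ (so $X^{\mathrm{hor}}=-\partial_{x_1}$), $\beta=x_1\,dy_1$ and $\gamma=dy_2$. The leftover equals $-1$. Yet $[\Lambda^{\mathrm{hor}},\Lambda^{\mathrm{ver}}]=0$ since the coefficients are constant, and $\mathcal L_{X^{\mathrm{hor}}}\Lambda^{\mathrm{ver}}=0$. So neither manipulation you list can close this step, and the coordinate fallback is only announced, not carried out.

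\textbf{The missing ingredient} is a correct evaluation formula. Polarize the standard identity $\tfrac12[\Pi,\Pi](a,b,c)=\pm\,c\bigl([\Pi^\sharp a,\Pi^\sharp b]-\Pi^\sharp[a,b]_\Pi\bigr)$ to get
\[
[\Pi_1,\Pi_2](a,b,c)=\pm\,c\bigl([\Pi_1^\sharp a,\Pi_2^\sharp b]+[\Pi_2^\sharp a,\Pi_1^\sharp b]-\Pi_1^\sharp[a,b]_{\Pi_2}-\Pi_2^\sharp[a,b]_{\Pi_1}\bigr).
\]
Now take $\Pi_1=\Lambda^{\mathrm{hor}}$, $\Pi_2=\Lambda^{\mathrm{ver}}$ and $(a,b,c)=(\alpha,\beta,\gamma)$.
\begin{itemize}
\item The second bracket is $[0,0]$.
\item The third term vanishes because $\gamma$ kills $\mathcal H$.
\item $[\alpha,\beta]_{\Lambda^{\mathrm{hor}}}=\mathcal L_{X^{\mathrm{hor}}}\beta$.
\end{itemize}
What remains is $\pm\bigl(\gamma([X^{\mathrm{hor}},Z_\beta])-\Lambda^{\mathrm{ver}}(\mathcal L_{X^{\mathrm{hor}}}\beta,\gamma)\bigr)=\pm(\mathcal L_{X^{\mathrm{hor}}}\Lambda^{\mathrm{ver}})(\beta,\gamma)$, with no leftover terms at all.

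This is essentially the paper's computation. The paper writes $[\Lambda^{\mathrm{hor}},\Lambda^{\mathrm{ver}}](\alpha,\beta,\gamma)=\gamma\bigl([X^{\mathrm{hor}},Z_\beta]-(\Lambda^{\mathrm{ver}})^\sharp[\alpha,\beta]_\Lambda\bigr)$. It then concludes using $(\mathcal L_X\Pi)^\sharp\beta=[X,\Pi^\sharp\beta]-\Pi^\sharp\mathcal L_X\beta$.
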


\begin{proof}
\noindent We evaluate the Jacobi identity
\[
\frac{1}{2}[\Lambda,\Lambda] = E \wedge \Lambda
\]
on $(\alpha,\beta,\gamma)$, where $\alpha$ is horizontal and $\beta,\gamma$ are vertical.
By Remark~\ref{rmq3.1}, only the mixed term contributes, since
\[
[\Lambda^{\mathrm{ver}},\Lambda^{\mathrm{ver}}](\alpha,\beta,\gamma)=0,
\qquad
[\Lambda^{\mathrm{hor}},\Lambda^{\mathrm{hor}}](\alpha,\beta,\gamma)=0,
\]
and $(E\wedge\Lambda)(\alpha,\beta,\gamma)=0$ (as $E$ is vertical and $\alpha$ is horizontal).
Hence the Jacobi identity is equivalent to
\[
[\Lambda^{\mathrm{hor}},\Lambda^{\mathrm{ver}}](\alpha,\beta,\gamma)=0.
\]
Let
\[
X^{\mathrm{hor}}=(\Lambda^{\mathrm{hor}})^\sharp(\alpha),
\qquad
Z=(\Lambda^{\mathrm{ver}})^\sharp(\beta).
\]
Using the definition of the Schouten bracket, we obtain
\[
[\Lambda^{\mathrm{hor}},\Lambda^{\mathrm{ver}}](\alpha,\beta,\gamma)
=
\gamma\big(
[X^{\mathrm{hor}},Z]
-
(\Lambda^{\mathrm{ver}})^\sharp([\alpha,\beta]_\Lambda)
\big), 
\] where \[
[\alpha,\beta]_\Lambda
=
\mathcal{L}_{X^{\mathrm{hor}}}\beta
-
\mathcal{L}_{Z}\alpha
-
d(\Lambda(\alpha,\beta)).
\]
Since $\alpha$ is horizontal and $\beta$ is vertical, we have
\[
\Lambda(\alpha,\beta)=0,
\qquad
\mathcal{L}_Z\alpha=0,
\]
and therefore
\[
[\alpha,\beta]_\Lambda=\mathcal{L}_{X^{\mathrm{hor}}}\beta.
\]
It follows that
\[
[\Lambda^{\mathrm{hor}},\Lambda^{\mathrm{ver}}](\alpha,\beta,\gamma)
=
\gamma\big(
[X^{\mathrm{hor}},Z]
-
(\Lambda^{\mathrm{ver}})^\sharp(\mathcal{L}_{X^{\mathrm{hor}}}\beta)
\big).
\]
By the defining identity for the Lie derivative of a bivector field,
\[
(\mathcal{L}_{X^{\mathrm{hor}}}\Lambda^{\mathrm{ver}})^\sharp(\beta)
=
[X^{\mathrm{hor}}, (\Lambda^{\mathrm{ver}})^\sharp(\beta)]
-
(\Lambda^{\mathrm{ver}})^\sharp(\mathcal{L}_{X^{\mathrm{hor}}}\beta),
\]
we deduce
\[
[\Lambda^{\mathrm{hor}},\Lambda^{\mathrm{ver}}](\alpha,\beta,\gamma)
=
\gamma\!\left((\mathcal{L}_{X^{\mathrm{hor}}}\Lambda^{\mathrm{ver}})^\sharp(\beta)\right).
\]
Therefore,
\[
[\Lambda^{\mathrm{hor}},\Lambda^{\mathrm{ver}}](\alpha,\beta,\gamma)=0
\quad \Longleftrightarrow \quad
\gamma\!\left((\mathcal{L}_{X^{\mathrm{hor}}}\Lambda^{\mathrm{ver}})^\sharp(\beta) \right)=0,~ ~  \forall ~ \beta, \gamma ~{\rm vertical}.
\]
Since $\beta$ and $\gamma$ are arbitrary among vertical 1-forms, this is equivalent to
$\mathcal{L}_{X^{\mathrm{hor}}}\Lambda^{\mathrm{ver}}=0.$

\end{proof}


\begin{lemma}\label{lem:curv-jacobi}
Let $(\Lambda,E)$ be the tensors on $M$ defined as above, with
\[
\Lambda = \Lambda^{\mathrm{ver}} + \Lambda^{\mathrm{hor}},
\qquad
E = E^{\mathrm{ver}},
\]
where $\Lambda^{\mathrm{hor}} \in \Gamma(\wedge^2 H)$ is a horizontal bivector field. 
Let $X,Y \in \mathfrak{X}(B)$. Let $X^{\mathrm{hor}}, Y^{\mathrm{hor}} \in \mathfrak{X}(M)$ denote their horizontal lifts. Let $\alpha,\beta \in \Omega^1(M)$ be horizontal 1-forms such that:
\[
(\Lambda^{\mathrm{hor}})^\sharp(\alpha)=X^{\mathrm{hor}},
\qquad
(\Lambda^{\mathrm{hor}})^\sharp(\beta)=Y^{\mathrm{hor}},
\]
and let $\gamma$ be a vertical 1-form.
Then,
\[ \left( [\Lambda,\Lambda] -2 E \wedge \Lambda \right)(\alpha,\beta,\gamma)=0 \iff
\operatorname{Curv}_\Gamma(X,Y)
=
(\Lambda^{\mathrm{ver}})^\sharp\!\big(d\bigl(\Lambda^{\mathrm{hor}}(\alpha,\beta)\bigr)\big)
+
\Lambda^{\mathrm{hor}}(\alpha,\beta)\,E.
\]
\end{lemma}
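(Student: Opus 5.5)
We evaluate the Jacobi tensor $[\Lambda,\Lambda]-2E\wedge\Lambda$ on $(\alpha,\beta,\gamma)$ with $\alpha,\beta$ horizontal and $\gamma$ vertical, using the standard formula for the Schouten bracket of a bivector with itself in terms of the bracket of 1-forms:
\[
\tfrac12[\Lambda,\Lambda](\alpha,\beta,\gamma)=\gamma\bigl(\Lambda^\sharp[\alpha,\beta]_\Lambda-[\Lambda^\sharp\alpha,\Lambda^\sharp\beta]\bigr),
\qquad
[\alpha,\beta]_\Lambda=\mathcal L_{\Lambda^\sharp\alpha}\beta-\mathcal L_{\Lambda^\sharp\beta}\alpha-d\bigl(\Lambda(\alpha,\beta)\bigr),
\]
with the same sign convention as in the proof of Lemma~\ref{Lemme 3.2}. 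Since $\alpha,\beta$ are horizontal, $\Lambda^\sharp\alpha=(\Lambda^{\mathrm{hor}})^\sharp\alpha=X^{\mathrm{hor}}$ and $\Lambda^\sharp\beta=Y^{\mathrm{hor}}$, while $\Lambda(\alpha,\beta)=\Lambda^{\mathrm{hor}}(\alpha,\beta)=:\phi$. On the Reeb side, $E$ is vertical, so in $(E\wedge\Lambda)(\alpha,\beta,\gamma)$ only the term pairing $E$ with $\gamma$ survives, giving $\gamma(E)\,\phi$ up to the sign fixed by the wedge convention.

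The first term on the Schouten side is $\gamma([X^{\mathrm{hor}},Y^{\mathrm{hor}}])=\gamma\bigl(\operatorname{Curv}_\Gamma(X,Y)\bigr)$, because $\gamma$ is vertical and so annihilates the horizontal part of the bracket. For the second term, we apply $\gamma\circ\Lambda^\sharp$ to $[\alpha,\beta]_\Lambda$. Since $\gamma$ is vertical, $\gamma\circ(\Lambda^{\mathrm{hor}})^\sharp=0$, so only $\gamma\circ(\Lambda^{\mathrm{ver}})^\sharp$ applied to the vertical component of $[\alpha,\beta]_\Lambda$ survives. The term $-d\phi$ contributes $-\gamma\bigl((\Lambda^{\mathrm{ver}})^\sharp d\phi\bigr)$. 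It remains to show that $\mathcal L_{X^{\mathrm{hor}}}\beta-\mathcal L_{Y^{\mathrm{hor}}}\alpha$ has vanishing vertical component. By Cartan's formula,
\[
\mathcal L_{X^{\mathrm{hor}}}\beta=i_{X^{\mathrm{hor}}}d\beta+d\bigl(\beta(X^{\mathrm{hor}})\bigr),
\]
and $\beta(X^{\mathrm{hor}})=\Lambda^{\mathrm{hor}}(\alpha,\beta)=\phi$, up to sign. Similarly $\alpha(Y^{\mathrm{hor}})=-\phi$, again up to sign. Hence the exact pieces combine with $-d\phi$, leaving an overall multiple of $d\phi$. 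This leaves the contraction terms $i_{X^{\mathrm{hor}}}d\beta-i_{Y^{\mathrm{hor}}}d\alpha$ evaluated on vertical vectors $V$.

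\textbf{Main obstacle.} We need to control these contraction terms. Evaluated on a vertical $V$, we have $d\beta(X^{\mathrm{hor}},V)=X^{\mathrm{hor}}(\beta(V))-V(\beta(X^{\mathrm{hor}}))-\beta([X^{\mathrm{hor}},V])$. Here $\beta(V)=0$. The bracket $[X^{\mathrm{hor}},V]$ is vertical when $X^{\mathrm{hor}}$ is a horizontal lift and $V$ is vertical, since the flow of a lift preserves fibres; hence $\beta([X^{\mathrm{hor}},V])=0$. So $d\beta(X^{\mathrm{hor}},V)=-V(\phi)$, and likewise $d\alpha(Y^{\mathrm{hor}},V)=+V(\phi)$. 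These fibrewise derivatives of $\phi$ combine with the exact pieces to produce exactly one net $d\phi$, in the combination $(\Lambda^{\mathrm{ver}})^\sharp d\phi$.

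Tracking the signs carefully gives
\[
\bigl([\Lambda,\Lambda]-2E\wedge\Lambda\bigr)(\alpha,\beta,\gamma)
=\pm2\,\gamma\Bigl((\Lambda^{\mathrm{ver}})^\sharp d\phi+\phi E-\operatorname{Curv}_\Gamma(X,Y)\Bigr).
\]
Because $\gamma$ ranges over all vertical 1-forms and the vector inside is vertical, the vanishing for all such $\gamma$ is equivalent to the stated identity. This careful bookkeeping of signs, which also fixes the orientation conventions of $\Lambda^{\mathrm{hor}}$ relative to $\Omega$, is where I expect the main difficulty. I would cross-check it against the symplectic Sternberg--Weinstein case with $E=0$, where the identity reduces to $\operatorname{Curv}=X_{\phi}$ along the fibre.
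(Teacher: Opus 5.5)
There is a genuine gap, and it is the step you deferred: the final sign. Up to that point your computation is correct and follows the same route as the paper:
\begin{itemize}
\item the curvature comes from $[X^{\mathrm{hor}},Y^{\mathrm{hor}}]$;
\item the term $(\Lambda^{\mathrm{ver}})^\sharp d\phi$ comes from the Koszul bracket;
\item the term $\phi E$ comes from the wedge product.
\end{itemize}
Your remark that $[X^{\mathrm{hor}},V]$ is vertical is the key fact. It gives $(\mathcal L_{X^{\mathrm{hor}}}\beta)(V)=-\beta([X^{\mathrm{hor}},V])=0$ directly, without the Cartan detour. However, assembling your own pieces with the formula you chose gives
\[
\tfrac12[\Lambda,\Lambda](\alpha,\beta,\gamma)=\gamma\bigl(\Lambda^\sharp[\alpha,\beta]_\Lambda\bigr)-\gamma\bigl([X^{\mathrm{hor}},Y^{\mathrm{hor}}]\bigr)=-\gamma\bigl((\Lambda^{\mathrm{ver}})^\sharp d\phi\bigr)-\gamma\bigl(\operatorname{Curv}_\Gamma(X,Y)\bigr).
\]
So the curvature term and the $d\phi$-term enter with the \emph{same} sign. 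With $(E\wedge\Lambda)(\alpha,\beta,\gamma)=\phi\,\gamma(E)$, the (H,H,V) condition then reads $\operatorname{Curv}_\Gamma(X,Y)=-\bigl((\Lambda^{\mathrm{ver}})^\sharp d\phi+\phi E\bigr)$. Your closing display is asserted, not derived, and it contradicts the lines before it. Your formula is not the culprit. For exact forms and $\mathcal L_E\Lambda=0$, the expression $\gamma(\Lambda^\sharp[\alpha,\beta]_\Lambda-[\Lambda^\sharp\alpha,\Lambda^\sharp\beta])-(E\wedge\Lambda)(\alpha,\beta,\gamma)$ is exactly minus the Jacobiator of $\{f,g\}=\Lambda(df,dg)+fE(g)-gE(f)$. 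Moreover, when $E=0$ the relative sign of the curvature and $d\phi$ terms does not depend on any convention.

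So no bookkeeping can close the gap. With the paper's conventions ($\beta(\Lambda^\sharp\alpha)=\Lambda(\alpha,\beta)$ and $\operatorname{Curv}_\Gamma=\operatorname{proj}_{\mathrm{ver}}[X^{\mathrm{hor}},Y^{\mathrm{hor}}]$), the stated identity is off by an overall sign.

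A concrete check is the paper's own example with $B=\mathbb R^2$ and $\omega=ds+u\,dv$. On $M\cong\mathbb R^5$, with coordinates $(u,v,x,w,z)$ and $w=y+s$, the horizontal distribution is spanned by $\partial_u$ and $\partial_v-u\partial_w$. One gets $\Lambda=x\,\partial_x\wedge\partial_w+c\,\partial_u\wedge(\partial_v-u\partial_w)$ and $E=\partial_w$, with $c=\pm1$ depending on how $\Omega$ is inverted. The Jacobiator $\{u,\{v,w\}\}+\{v,\{w,u\}\}+\{w,\{u,v\}\}$ equals $c(1-c)$. Since $\mathcal L_E\Lambda=0$, this is, up to sign, the value of $\tfrac12[\Lambda,\Lambda]-E\wedge\Lambda$ on the (H,H,V) triple $(du,dv,dw+u\,dv)$, so only $c=1$ gives a Jacobi structure. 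Yet for $X=\partial_u$, $Y=\partial_v$ one has $\alpha=-c\,dv$, $\beta=c\,du$, $\phi=c$ and $\operatorname{Curv}_\Gamma(X,Y)=-\partial_w$, while the stated right-hand side is $c\,\partial_w$. The equivalence therefore fails for either choice of $c$.

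The paper's proof reaches the stated sign only because it evaluates $\tfrac12[\Lambda^{\mathrm{hor}},\Lambda^{\mathrm{hor}}]$ and $[\Lambda^{\mathrm{ver}},\Lambda^{\mathrm{hor}}]$ with incompatible conventions. Its cyclic formula for $\tfrac12[\Pi,\Pi]$ is not correct as written, and $[\beta,\gamma]_{\Lambda^{\mathrm{hor}}}=\mathcal L_{Y^{\mathrm{hor}}}\gamma$ does not vanish.

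What your method actually proves is $\operatorname{Curv}_\Gamma(X,Y)=-\bigl((\Lambda^{\mathrm{ver}})^\sharp d\phi+\phi E\bigr)$, i.e. minus the fibrewise Hamiltonian vector field of $\phi$. The Sternberg--Weinstein cross-check you proposed would have exposed this, since there the curvature is $-X_\phi$, not $X_\phi$.
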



\begin{proof}
\noindent We evaluate the Jacobi identity
\[
\frac{1}{2}[\Lambda,\Lambda]=E\wedge\Lambda
\]
on $(\alpha,\beta,\gamma)$, where $\alpha,\beta$ are horizontal $1$-forms and $\gamma$ is vertical.
By Remark~\ref{rmq3.1}, only the terms
\[
\frac{1}{2}[\Lambda^{\mathrm{hor}},\Lambda^{\mathrm{hor}}]
\quad \text{and} \quad
[\Lambda^{\mathrm{ver}},\Lambda^{\mathrm{hor}}]
\]
contribute. Indeed,
\[
[\Lambda^{\mathrm{ver}},\Lambda^{\mathrm{ver}}](\alpha,\beta,\gamma)=0,
\qquad
(E\wedge\Lambda^{\mathrm{ver}})(\alpha,\beta,\gamma)=0,
\]
since $\alpha,\beta$ are horizontal and $\Lambda^{\mathrm{ver}}$ acts only on vertical forms.

\medskip

\noindent $\bullet$ {\bf Horizontal term:}

\noindent We recall the general identity for the Schouten bracket of a bivector field  $\Pi$:
\begin{equation}\label{eq:schouten-eval}
\frac{1}{2}[\Pi,\Pi](\alpha,\beta,\gamma)
=
\sum_{\mathrm{cycl}}
\alpha\!\big([\Pi^\sharp(\beta),\Pi^\sharp(\gamma)]\big)
-
\sum_{\mathrm{cycl}}
\Pi\!\big([\beta,\gamma]_\Pi,\alpha\big),
\end{equation}
\noindent We apply this to $\Pi=\Lambda^{\mathrm{hor}}$ taking into account 
$(\Lambda^{\mathrm{hor}})^\sharp(\alpha), (\Lambda^{\mathrm{hor}})^\sharp(\beta) \in H,$ and $(\Lambda^{\mathrm{hor}})^\sharp(\gamma)=0$.

\medskip

\noindent \emph{Second term:} For any $1$-form $\eta$, we have $ \Lambda^{\mathrm{hor}}(\eta,\gamma)=0$
because $\gamma|_H=0$. Hence
\[
\Lambda^{\mathrm{hor}}([\alpha,\beta]_{\Lambda^{\mathrm{hor}}},\gamma)=0.
\]
Similarly, since $(\Lambda^{\mathrm{hor}})^\sharp(\gamma)=0$, the Koszul bracket gives
$[\beta,\gamma]_{\Lambda^{\mathrm{hor}}}=0$ and $[\gamma,\alpha]_{\Lambda^{\mathrm{hor}}}=0$,
so all cyclic contributions vanish, and therefore
\[
\sum_{\mathrm{cycl}}\Lambda^{\mathrm{hor}}([\alpha,\beta]_{\Lambda^{\mathrm{hor}}},\gamma)=0.
\]

\medskip

\noindent \emph{First term:} Expanding the cyclic sum,
\[
\sum_{\mathrm{cycl}}
\gamma\big([(\Lambda^{\mathrm{hor}})^\sharp(\alpha),(\Lambda^{\mathrm{hor}})^\sharp(\beta)]\big)
=
\gamma([X^{\mathrm{hor}},Y^{\mathrm{hor}}]),
\]
since the two other cyclic terms involve $(\Lambda^{\mathrm{hor}})^\sharp(\gamma)=0$ and vanish.
Thus,
\[
\frac{1}{2}[\Lambda^{\mathrm{hor}},\Lambda^{\mathrm{hor}}](\alpha,\beta,\gamma)
=
\gamma([X^{\mathrm{hor}},Y^{\mathrm{hor}}]),
\]
where
\[
X^{\mathrm{hor}}=(\Lambda^{\mathrm{hor}})^\sharp(\alpha),
\quad
Y^{\mathrm{hor}}=(\Lambda^{\mathrm{hor}})^\sharp(\beta).
\]
Decomposing the bracket,
\[
[X^{\mathrm{hor}},Y^{\mathrm{hor}}]
=
[X^{\mathrm{hor}},Y^{\mathrm{hor}}]^{\mathrm{hor}}
+
\mathrm{Curv}_\Gamma(X,Y),
\]
and since $\gamma$ vanishes on $H$, we obtain
\[
\frac{1}{2}[\Lambda^{\mathrm{hor}},\Lambda^{\mathrm{hor}}](\alpha,\beta,\gamma)
=
\gamma(\mathrm{Curv}_\Gamma(X,Y)).
\]

\medskip

\noindent $\bullet$ {\bf Mixed term.}

Using the Koszul bracket,
\[
[\alpha,\beta]_\Lambda
=
\mathcal{L}_{\Lambda^\sharp(\alpha)}\beta
-
\mathcal{L}_{\Lambda^\sharp(\beta)}\alpha
-
d(\Lambda(\alpha,\beta)).
\]
Since $\alpha,\beta$ are horizontal,
\[
\Lambda^\sharp(\alpha)=X^{\mathrm{hor}},
\quad
\Lambda^\sharp(\beta)=Y^{\mathrm{hor}},
\quad
\Lambda(\alpha,\beta)=\Lambda^{\mathrm{hor}}(\alpha,\beta).
\]
Hence
\[
[\alpha,\beta]_\Lambda
=
\mathcal{L}_{X^{\mathrm{hor}}}\beta
-
\mathcal{L}_{Y^{\mathrm{hor}}}\alpha
-
d(\Lambda^{\mathrm{hor}}(\alpha,\beta)).
\]
Let $Z=(\Lambda^{\mathrm{ver}})^\sharp(\gamma)$. Since $\alpha,\beta$ are horizontal, their Lie derivatives remain horizontal and vanish on vertical vectors, so
\[
[\alpha,\beta]_\Lambda(Z)
=
- d(\Lambda^{\mathrm{hor}}(\alpha,\beta))(Z).
\]
Thus
\[
[\Lambda^{\mathrm{ver}},\Lambda^{\mathrm{hor}}](\alpha,\beta,\gamma)
=
-\gamma\!\left((\Lambda^{\mathrm{ver}})^\sharp(d(\Lambda^{\mathrm{hor}}(\alpha,\beta)))\right).
\]

\medskip

\noindent  Combining both contributions,
\[
\frac{1}{2}[\Lambda,\Lambda](\alpha,\beta,\gamma)
=
\gamma(\mathrm{Curv}_\Gamma(X,Y))
-
\gamma\!\left((\Lambda^{\mathrm{ver}})^\sharp(d(\Lambda^{\mathrm{hor}}(\alpha,\beta)))\right).
\]
On the other hand,
\[
(E\wedge\Lambda)(\alpha,\beta,\gamma)
=
\Lambda^{\mathrm{hor}}(\alpha,\beta)\,E(\gamma).
\]
Since $\gamma$ is arbitrary among vertical $1$-forms, the Jacobi identity is equivalent to
\[
\mathrm{Curv}_\Gamma(X,Y)
=
(\Lambda^{\mathrm{ver}})^\sharp\!\big(d(\Lambda^{\mathrm{hor}}(\alpha,\beta))\big)
+
\Lambda^{\mathrm{hor}}(\alpha,\beta)\,E.
\]
\end{proof}


\begin{proposition}\label{Proposition 3.1}
The pair $(\Lambda,E)$ defines a Jacobi structure on $M$
if and only if the following conditions hold:
\begin{enumerate}[label=(\alph*)]

\item
\[
[\Lambda^{\mathrm{ver}},\Lambda^{\mathrm{ver}}]
=
2\, E^{\mathrm{ver}} \wedge \Lambda^{\mathrm{ver}}.
\]

\item
\[
[\Lambda^{\mathrm{hor}}, E^{\mathrm{ver}}]=0.
\]

\item
The bivector field  $\Lambda^{\mathrm{hor}}$ is horizontally Poisson, i.e.
\[ [\Lambda^{\mathrm{hor}},\Lambda^{\mathrm{hor}}]
(\alpha,\beta,\gamma)=0
\]
for all horizontal $1$-forms $\alpha,\beta,\gamma$.

\item
For every $X \in \mathfrak{X}(B)$ and its horizontal lift $X^{\mathrm{hor}}$ satisfies:
\[
\mathcal{L}_{X^{\mathrm{hor}}}\Lambda^{\mathrm{ver}}=0.
\]

\item
The curvature of the connection $\Gamma$ satisfies
\[
\operatorname{Curv}_{\Gamma}(X,Y)
=
(\Lambda^{\mathrm{ver}})^\sharp\!\bigl(d\bigl(\Lambda^{\mathrm{hor}}(\alpha,\beta)\bigr)\bigr)
+
\Lambda^{\mathrm{hor}}(\alpha,\beta)\, E^{\mathrm{ver}},
\]
for all $X,Y \in \mathfrak{X}(B)$, where $\alpha,\beta$ are horizontal $1$-forms satisfying
\[
(\Lambda^{\mathrm{hor}})^\sharp(\alpha)=X^{\mathrm{hor}},
\qquad
(\Lambda^{\mathrm{hor}})^\sharp(\beta)=Y^{\mathrm{hor}}.
\]
The right-hand side is independent of the choice of such $\alpha,\beta$, since $\Lambda^{\mathrm{hor}}$ vanishes on vertical covectors.

\end{enumerate}
\end{proposition}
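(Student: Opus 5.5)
The plan is to decompose both Jacobi equations, $[\Lambda,\Lambda]=2E\wedge\Lambda$ and $[\Lambda,E]=0$, according to the splitting $TM=\mathcal H\oplus\mathcal V$. Dually, $T^*M=\mathcal H^\circ\oplus\mathcal V^\circ$, where we call covectors annihilating $\mathcal V$ \emph{horizontal} and those annihilating $\mathcal H$ \emph{vertical}. A trivector is determined by its values on triples of covectors of the four types (hhh), (hhv), (hvv), (vvv), and a bivector by its values on (hh), (hv), (vv). So $(\Lambda,E)$ is Jacobi if and only if each of these type-components of $\tfrac12[\Lambda,\Lambda]-E\wedge\Lambda$ and of $[\Lambda,E]$ vanishes. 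The proof then consists in identifying each component with one of (a)--(e).

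\textbf{Trivector equation.}
\begin{itemize}
\item (vvv): Only $[\Lambda^{\mathrm{ver}},\Lambda^{\mathrm{ver}}]$ and $E^{\mathrm{ver}}\wedge\Lambda^{\mathrm{ver}}$ contribute. This is because $\Lambda^{\mathrm{hor}}$ kills vertical covectors, and the mixed bracket evaluated on three vertical covectors reduces, via the Koszul-type formula of Lemma~\ref{Lemme 3.2}, to terms containing $(\Lambda^{\mathrm{hor}})^\sharp$ of a vertical form, which vanish. This component is therefore (a), exactly as in Remark~\ref{rmq3.1}.
\item (hvv): By Lemma~\ref{Lemme 3.2}, this component is equivalent to $\mathcal L_{X^{\mathrm{hor}}}\Lambda^{\mathrm{ver}}=0$ for $X^{\mathrm{hor}}=(\Lambda^{\mathrm{hor}})^\sharp(\alpha)$. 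Since $\Omega$ is nondegenerate on $\mathcal H$, $(\Lambda^{\mathrm{hor}})^\sharp$ maps horizontal covectors onto $\mathcal H$, so every horizontal lift arises this way. Horizontal lifts span $\mathcal H$ over $C^\infty(M)$, and $\mathcal L_{fX}\Lambda^{\mathrm{ver}}=f\mathcal L_X\Lambda^{\mathrm{ver}}-X\wedge\Lambda^{\mathrm{ver}\sharp}(df)$, whose second term has no (vv) part. Hence the condition reduces to (d).
\item (hhv): This is literally Lemma~\ref{lem:curv-jacobi}, giving (e). Surjectivity of $(\Lambda^{\mathrm{hor}})^\sharp$ onto $\mathcal H$ again guarantees that $\alpha,\beta$ exist for all $X,Y$. $C^\infty$-linearity in $\alpha,\beta$ of the tensorial left side justifies testing only on such forms.
\item (hhh): Both $E\wedge\Lambda$ and $[\Lambda^{\mathrm{ver}},\Lambda^{\mathrm{ver}}]$ vanish on three horizontal covectors. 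The mixed bracket $[\Lambda^{\mathrm{ver}},\Lambda^{\mathrm{hor}}]$ also vanishes there: in the Koszul formula every term has a $\Lambda^{\mathrm{ver}}$ evaluated on a horizontal form, or pairs a horizontal form with $(\Lambda^{\mathrm{ver}})^\sharp(\cdot)\in\mathcal V$. So this component is exactly (c).
\end{itemize}

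\textbf{Vector-field equation.} Split $[\Lambda,E]=[\Lambda^{\mathrm{ver}},E^{\mathrm{ver}}]+[\Lambda^{\mathrm{hor}},E^{\mathrm{ver}}]$. Observe that the first summand is already contained in the Jacobi condition of the fiber structure, which I will take to be part of (a) (the fiberwise pair is Jacobi). The second summand is (b).

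\textbf{Main obstacle.} The delicate point is the bookkeeping that cross terms land in only one type-component, and in particular that $[\Lambda^{\mathrm{hor}},E^{\mathrm{ver}}]$ and $[\Lambda^{\mathrm{ver}},E^{\mathrm{ver}}]$ do not mix. Since $E^{\mathrm{ver}}$ need not preserve $\mathcal H$, the bracket $[\Lambda^{\mathrm{hor}},E^{\mathrm{ver}}]$ may have (hv) and (vv) pieces. I would handle this by stating (b) as the full bivector equation $[\Lambda^{\mathrm{hor}},E^{\mathrm{ver}}]=0$ and pairing it with $[\Lambda^{\mathrm{ver}},E^{\mathrm{ver}}]=0$ from the fiber structure, rather than splitting by type. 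The converse direction of the theorem's "iff" follows by reading each equivalence backwards, since together the type components exhaust the tensors.
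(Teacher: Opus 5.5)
Your proposal is correct and follows essentially the same route as the paper. Both split $\tfrac12[\Lambda,\Lambda]-E\wedge\Lambda$ into its (vvv), (hvv), (hhv) and (hhh) components, handled via Remark~\ref{rmq3.1} and Lemmas~\ref{Lemme 3.2} and~\ref{lem:curv-jacobi}, and both write $[\Lambda,E]=[\Lambda^{\mathrm{ver}},E^{\mathrm{ver}}]+[\Lambda^{\mathrm{hor}},E^{\mathrm{ver}}]$, killing the first term by the fiberwise Jacobi structure (which the paper also tacitly reads into (a)); your surjectivity and $C^\infty$-linearity remarks just make explicit steps the paper leaves implicit. One small correction to your last paragraph: brackets of $E^{\mathrm{ver}}$ with horizontal lifts are vertical, so $[\Lambda^{\mathrm{hor}},E^{\mathrm{ver}}]$ has only (hh) and (hv) components and no (vv) piece, hence it never mixes with the purely vertical $[\Lambda^{\mathrm{ver}},E^{\mathrm{ver}}]$.
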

\begin{proof}
Recall that a pair $(\Lambda,E)$ defines a Jacobi structure on $M$ if and only if
\[
\frac{1}{2}[\Lambda,\Lambda]=E\wedge\Lambda,
\qquad
[\Lambda,E]=0,
\]
where $[\cdot,\cdot]$ denotes the Schouten bracket.
Let $\Gamma$ be the connection on $M$ induced by the principal connection on $P$.  We have:
\[
\Lambda=\Lambda^{\mathrm{ver}}+\Lambda^{\mathrm{hor}}, 
\qquad 
E= E^{\mathrm{ver}},
\]
where $\Lambda^{\mathrm{ver}}\in \Gamma(\wedge^2 V)$, $\Lambda^{\mathrm{hor}}\in \Gamma(\wedge^2 H)$, and $ E^{\mathrm{ver}}\in \Gamma(V)$.

Throughout the proof, we consider $1$-forms on $M$ which are \emph{projectable}, i.e. pullbacks of forms on the base or dual to horizontal and vertical distributions. In particular, horizontal $1$-forms vanish on the vertical distribution  $\mathcal V$, and their Lie derivatives along horizontal lifts remain horizontal.

\medskip

\noindent \textbf{(i) Necessary conditions.}
Assume that $(\Lambda,E)$ is a Jacobi structure.

\smallskip

\noindent \emph{(a) Case (V,V,V):}
Let $\alpha,\beta,\gamma$ be vertical $1$-forms. Since $\Lambda^{\mathrm{hor}}$ vanishes on vertical forms, and only $\Lambda^{\mathrm{ver}}$ contributes.
Hence
\[
[\Lambda^{\mathrm{ver}},\Lambda^{\mathrm{ver}}](\alpha,\beta,\gamma)
=
2\, E^{\mathrm{ver}}\wedge\Lambda^{\mathrm{ver}}(\alpha,\beta,\gamma).
\]

\smallskip

\noindent \emph{(b) Compatibility with $E$:}
From $[\Lambda,E]=0$, we obtain
\[
[\Lambda^{\mathrm{ver}}, E^{\mathrm{ver}}]
+
[\Lambda^{\mathrm{hor}}, E^{\mathrm{ver}}]
=0.
\]
Since $(\Lambda^{\mathrm{ver}}, E^{\mathrm{ver}})$ restricts on each fiber to a Jacobi structure, we have
\[
[\Lambda^{\mathrm{ver}}, E^{\mathrm{ver}}]=0,
\]
and thus
\[
[\Lambda^{\mathrm{hor}}, E^{\mathrm{ver}}]=0.
\]

\smallskip

\noindent \emph{(c) Case (H,H,H):}
Let $\alpha,\beta,\gamma$ be horizontal $1$-forms. Then $ E^{\mathrm{ver}}$ annihilates them, and $\Lambda^{\mathrm{ver}}$ vanishes on them. We have:
\[
(E\wedge\Lambda)(\alpha,\beta,\gamma)=0,
\]
and
\[
[\Lambda, \Lambda] (\alpha,\beta,\gamma)
=
[\Lambda^{\mathrm{hor}},\Lambda^{\mathrm{hor}}](\alpha,\beta,\gamma).
\]
Therefore
\[
[\Lambda^{\mathrm{hor}},\Lambda^{\mathrm{hor}}]=0
\]

\smallskip

\noindent \emph{(d) Case (H,V,V):}  
If $\alpha$ is horizontal and $\beta,\gamma$ are vertical, then
\[
(E\wedge\Lambda)(\alpha,\beta,\gamma)=0,
\]
since $E$ is vertical and $\Lambda$ has no mixed terms.  
Lemma~\ref{Lemme 3.2} then implies
\[
\left(\frac{1}{2}[\Lambda,\Lambda]-E\wedge\Lambda\right)(\alpha,\beta,\gamma)=0
\quad \Longleftrightarrow \quad
\mathcal{L}_{X^{\mathrm{hor}}}\Lambda^{\mathrm{ver}}=0.
\]

\smallskip

\noindent \emph{(e) Case (H,H,V):}  
If $\alpha,\beta$ are horizontal and $\gamma$ is vertical, Lemma~\ref{lem:curv-jacobi} shows that the Jacobi identity is equivalent to
\[
\operatorname{Curv}_\Gamma(X,Y)
=
(\Lambda^{\mathrm{ver}})^\sharp\!\bigl(d(\Lambda^{\mathrm{hor}}(\alpha,\beta))\bigr)
+
\Lambda^{\mathrm{hor}}(\alpha,\beta)\, E^{\mathrm{ver}}.
\]
Thus conditions (a)--(e) are necessary.

\medskip

\noindent\textbf{(ii) Sufficient conditions.}
Conversely, assume that conditions (a)--(e) hold. To check that  $[\Lambda,\Lambda]=2E\wedge\Lambda$, we evaluate both sides on all possible types of triples:
\begin{itemize}
\item \textbf{(V,V,V):} by (a),
\[
[\Lambda,\Lambda](\alpha,\beta,\gamma)
=
[\Lambda^{\mathrm{ver}},\Lambda^{\mathrm{ver}}](\alpha,\beta,\gamma)
=
2( E^{\mathrm{ver}}\wedge\Lambda^{\mathrm{ver}})(\alpha,\beta,\gamma).
\]
\item \textbf{(H,H,H):} by (c), 
\[
[\Lambda,\Lambda](\alpha,\beta,\gamma)=0,
\qquad
(E\wedge\Lambda)(\alpha,\beta,\gamma)=0.
\]
\item \textbf{(H,V,V):} both sides vanish, and by Lemma~\ref{Lemme 3.2}, condition (d) ensures equality.

\item \textbf{(H,H,V):} Lemma~\ref{lem:curv-jacobi} shows that the identity
\[
\frac{1}{2}[\Lambda,\Lambda]=E\wedge\Lambda
\]
is equivalent to condition (e).
\end{itemize}

\noindent These cases exhaust all possibilities, hence
\[
[\Lambda,\Lambda]=2E\wedge\Lambda.
\]

\medskip

\noindent Now we will check that $[\Lambda,E]=0$.
Decompose
\[
[\Lambda,E]
=
[\Lambda^{\mathrm{ver}}, E^{\mathrm{ver}}]
+
[\Lambda^{\mathrm{hor}}, E^{\mathrm{ver}}].
\]
By Relation (b),
\[
[\Lambda^{\mathrm{hor}}, E^{\mathrm{ver}}]=0.
\]
By (a), $(\Lambda^{\mathrm{ver}}, E^{\mathrm{ver}})$ defines a Jacobi structure on each fiber, hence
\[
[\Lambda^{\mathrm{ver}}, E^{\mathrm{ver}}]=0.
\]
Therefore,
\[
[\Lambda,E]=0.
\]

\medskip

\noindent Thus conditions (a)--(e) are necessary and sufficient for $(\Lambda,E)$ to define a Jacobi structure on $M$.
This completes the proof.

\end{proof}


\section{Example}
Now, we provide an example illustrating Theorem~\ref{MainTheorem}.
 Let $F=\mathbb{R}^3$ with coordinates $(x,y,z)$ and consider the tensors:
\[
\Lambda_F = x\,\partial_x \wedge \partial_y,
\qquad
E_F = \partial_y.
\]
A direct computation shows that
\[
[\Lambda_F,\Lambda_F]_{\mathrm{SN}} = 2 E_F \wedge \Lambda_F =0.
\]
Hence $(\Lambda_F,E_F)$ defines a Jacobi structure on $F$.
This structure is neither Poisson (since $E_F \neq 0$) nor contact, because $\Lambda_F$ is degenerate.

\medskip

\noindent $\bullet$
\emph{Group action.}
Consider the action of the group $G=(\mathbb{R},+)$ given by:
\[
t\cdot(x,y,z)=(x,y+t,z).
\]
The fundamental vector field associated with $X=1 \in \mathfrak g$ is
\[
X_F = \partial_y = E_F.
\]
Since $\Lambda_F$ and $E_F$ are invariant under translation in $y$, this action preserves the Jacobi structure:
\[
(\varphi_t)_*\Lambda_F=\Lambda_F,
\qquad
(\varphi_t)_*E_F=E_F.
\]

\medskip

\noindent $\bullet$ 
\emph{Hamiltonian structure.}
Define a moment map $\mu : F \to \mathfrak g^* \simeq \mathbb{R}$ by
\[
\mu \equiv 1.
\]
It is $G$-equivariant (the coadjoint action being trivial). Moreover,
\[
\Lambda_F^\sharp(d\mu)=0,
\qquad
\mu E_F = E_F,
\]
hence $X_F = \Lambda_F^\sharp(d\mu) + \mu E_F.$
Thus, the action is Hamiltonian in the sense of Definition~\ref{Hamiltonian Jacobi action}.

\medskip

\noindent $\bullet$
\emph{Principal bundle and connection.}
Let $(B,\omega_B)$ be a symplectic manifold of dimension $2m$. Consider the trivial principal bundle
\[
P = B \times \mathbb{R}
\]
with structure group $G=\mathbb{R}$. We choose a connection
$\omega \in \Omega^1(P,\mathfrak g)$ whose curvature satisfies:
\[
\mathrm{Curv}_\omega = \omega_B \otimes 1,
\]
which is always possible locally (and globally if $[\omega_B]$ is integral).

\medskip

\noindent $\bullet$ 
\emph{$\mu(F)$-fatness condition.}
Since $\mu \equiv 1$, for all $f \in F$ we have
\[
K_{\mu(f)}(X,Y)=\langle \mu(f),\mathrm{Curv}_\omega(X,Y)\rangle
=\omega_B(X,Y).
\]
Since $\omega_B$ is symplectic, the connection is $\mu(F)$-fat.

\medskip

\noindent $\bullet$
\emph{Structure on the associated bundle.}
The associated bundle
\[
M = P \times_G F \simeq B \times F
\]
carries a Jacobi structure $(\Lambda,E)$ provided by Theorem~\ref{MainTheorem}. Its vertical component coincides with $(\Lambda_F,E_F)$, while its horizontal component is determined by the curvature form via the coupling construction.

\medskip

\noindent
\emph{Explicit local expression.}
Let $(u^1,\dots,u^{2m})$ be local coordinates on $B$, and denote
\[
\Pi_B = \omega_B^{-1} = \sum_{i<j} \pi^{ij}(u)\,\partial_{u^i}\wedge\partial_{u^j}
\]
the Poisson bivector field  inverse to $\omega_B$. Then, in a local trivialization of $M$, the Jacobi structure reads
\[
\boxed{
\Lambda = x\,\partial_x \wedge \partial_y \;+\; \Pi_B^{\mathrm{hor}},
\qquad
E = \partial_y,
}
\]
where $\Pi_B^{\mathrm{hor}}$ denotes the horizontal lift of $\Pi_B$ defined by the connection $\omega$.

\medskip

\noindent
This example illustrates Theorem~\ref{MainTheorem} in a nontrivial case: the Jacobi structure on the fiber is neither Poisson nor contact, the curvature is nonzero, and the global structure arises from a coupling between the symplectic geometry of the base and the Jacobi geometry of the fiber.\\


\section{ Relation with existing constructions}

We show that the construction introduced here fits into a unified framework encompassing several classical theories, in particular the symplectic, locally conformal symplectic, and contact cases.

\subsection{The symplectic case}

Let $(F,\omega)$ be a symplectic manifold. The bivector field $\Lambda = \omega^{-1}$ and the vector field $E=0$ define a Jacobi structure (in fact, a Poisson structure).
A Hamiltonian action of a Lie group $G$ on $F$ is characterized by the existence of a moment map $\mu : F \to \mathfrak{g}^*$ such that, for every $X \in \mathfrak{g}$,
\[
i_{X_F}\omega = d\mu^X,
\qquad
X_F = \Lambda^\sharp(d\mu^X).
\]

\noindent In this setting, the nondegeneracy condition on the curvature (in the sense of Weinstein) coincides with that introduced in~\cite{Wei80}. Our construction yields a nondegenerate bivector  field on the total space $M$, while the field $E$ remains zero, so that the resulting structure is symplectic.
Thus we recover Weinstein's theorem~\cite[Thm.~3.2]{Wei80} as a special case.

\subsection{The locally conformal symplectic case}

\noindent Let $(F,\omega,\theta)$ be a locally conformal symplectic (LCS) manifold. It is well known that the pair
\[
\Lambda := \omega^{-1},
\qquad
E := \theta^\sharp
\]
defines a Jacobi structure satisfying
\[
[\Lambda,\Lambda] = 2E \wedge \Lambda,
\qquad
[\Lambda,E] = 0.
\]
An action is called twisted Hamiltonian \cite{HRS15} if
\[
i_{X_F}\omega = d\mu^X - \theta \wedge \mu^X.
\]
It follows that the fundamental vector field can be written as
\[
X_F = \Lambda^\sharp(d\mu^X) - \mu^X E,
\]
which corresponds to the natural formulation in Jacobi geometry.
In this framework, our construction applies directly and provides on $M$ a Jacobi structure of locally conformal symplectic type, whose Lee form is given by the pullback $\pi^*\theta$.
Thus we recover, as a special case, the main theorem of~\cite{HRS15}.

\subsection{The contact case}

\noindent Let $(F,\alpha)$ be a contact manifold of dimension $2n+1$. It is well known (see for instance \cite{Vai85}) that such a structure canonically determines a Jacobi structure $(\Lambda_F,E_F)$.
The Reeb vector field $R$ is defined by
\[
\alpha(R)=1,
\qquad
i_R d\alpha = 0,
\]
and we set $E_F = R$. The bivector field $\Lambda_F$ is characterized by the conditions $\Lambda_F^\sharp(\alpha)=0$ and, on the subbundle $\ker\alpha$, the map $\Lambda_F^\sharp$ is the inverse of the symplectic form $d\alpha|_{\ker\alpha}$. One then verifies that
\[
[\Lambda_F,\Lambda_F]_{\mathrm{SN}} = 2E_F \wedge \Lambda_F,
\qquad
[\Lambda_F,E_F]_{\mathrm{SN}} = 0,
\]
so that $(\Lambda_F,E_F)$ is a Jacobi structure.

\medskip

An action of $G$ by contact automorphisms is said to be Hamiltonian if there exists a $G$-equivariant moment map $\mu : F \to \mathfrak{g}^*$ such that
\[
i_{X_F}\alpha = \mu^X,
\qquad
X_F = \Lambda_F^\sharp(d\mu^X) + \mu^X R,
\]
where $X_F$ denotes the fundamental vector field associated with $X$.
This formulation is equivalent to the general notion of Hamiltonian action defined above.

\medskip

In this context, our theorem may be interpreted as an extension, to the Jacobi setting, of Lerman's constructions~\cite{Ler04} for contact bundles. In particular, the nondegeneracy condition on the curvature appearing in Proposition~\ref{Proposition 3.1} provides a Jacobi analogue of the condition introduced in these works, expressing compatibility between the curvature of the connection and the vertical Jacobi structure.

\medskip

Thus, our construction allows one to interpret the global contact structures obtained by Lerman within the more general framework of Jacobi geometry.

\subsection{The Poisson case}

In~\cite{AGAG08},  the author constructs a coupling Dirac structure on an associated bundle $P \times_G F$, starting from a connection and a Hamiltonian Poisson manifold $(F,\pi_F)$. Under a nondegeneracy condition on the curvature, this structure arises from a Poisson bivector  field on the total space.
Our theorem provides a counterpart of this construction in the Jacobi setting: a Hamiltonian action in Definition \ref{Hamiltonian Jacobi action}, combined with a suitable nondegeneracy condition, induces on $M$ a Jacobi structure compatible with those on the fibers. The presence of the Reeb vector field leads to additional compatibility conditions specific to this framework.


\section{Conclusion and perspectives:}

In this work, we extend the construction of coupling forms from symplectic, locally conformal symplectic, and contact geometry to the framework of Jacobi geometry. Our main result shows that a Hamiltonian action of a Lie group \(G\) on a Jacobi manifold \((F,\Lambda_F,E_F)\), together with a \(\mu(F)\)-fat connection on a principal \(G\)-bundle, induces a natural Jacobi structure on the associated bundle \(P \times_G F\). This construction provides a unified framework that encompasses and extends previous results due to Weinstein~\cite{Wei80}, Lerman~\cite{Ler04}, Otiman~\cite{HRS15}, and Wade~\cite{Wade2013}.

Since the formulation of Jacobi structures in terms of line bundles is more general than the classical definition due to Lichnerowicz, it is natural to investigate whether the present construction admits an extension to this broader setting. We expect that such an extension would further clarify the conceptual structure of the theory and broaden its range of applicability. This will be addressed in a forthcoming work.


\end{document}